\DeclareMathAlphabet{\mathcal}{OMS}{cmsy}{m}{n}
\DeclareSymbolFont{largesymbols}{OMX}{cmex}{m}{n}
\newcommand{\mS}{\mathcal{S}}
\newcommand{\R}{\mathbb{R}}
\newcommand{\mbE}{\mathbb{E}}
\newcommand{\innerp}[1]{\langle #1 \rangle}
\newcommand{\expectw}[2]{\mathbb{E}_{#1}\left( #2 \right)}
\newcommand{\prob}[1]{\mathbb{P}\left( #1 \right)}
\newtheorem{definition}{Definition}[section]
\newtheorem{lemma}{Lemma}[section]
\newtheorem{thm}{Theorem}
\title{
A New Proof of Sub-Gaussian Norm Concentration Inequality}
\author{
	{\hspace{1mm}Zishun Liu} \\
	Georgia Institute of Technology\\
	Atlanta, GA 30332, US \\
	\texttt{zliu910@gatech.edu} \\
    \And
	{\hspace{1mm}Sam Power} \\
	University of Bristol\\
	Bristol, BS8 1QU, UK \\
	\texttt{sam.power@bristol.ac.uk} \\
	\And
	{\hspace{1mm}Yongxin Chen} \\
	Georgia Institute of Technology\\
	Atlanta, GA 30332, US \\
	\texttt{yongchen@gatech.edu} \\
}
\date{}
\begin{document}
\maketitle

\begin{abstract}  
    We present a new method for proving the norm concentration inequality of sub-Gaussian variables. Our proof is based on an averaged version of the moment generating function, termed the averaged moment generating function. Our method applies to both vector cases to bound the vector norm and matrix cases to bound the operator norm.
    Compared with the widely adopted $\varepsilon$-net technique-based proof of the sub-Gaussian norm concentration inequality, our method does not rely on the union bound and promises a tighter concentration bound.  
\end{abstract}

\section{Sub-Gaussian Norm Concentration Inequality}

Concentration inequalities are mathematical tools in probability theory that describe how a random variable deviates from some value, typically its expectation. Some commonly used instances include Markov's inequality, Chebyshev's inequality, and Chernoff bounds. These concentration inequalities play an essential role in various fields, including probability theory, statistics, machine learning, finance, etc, providing probabilistic guarantees when dealing with random quantities. 
Many probability distributions exhibit concentration properties, among which we consider an important class known as \textit{sub-Gaussian} distribution. The sub-Gaussian random variable is formally defined as follows.
\begin{definition}
A random variable $X\in\R$ is said to be sub-Gaussian with variance proxy $\sigma^2>0$ if
    \begin{equation}
        \mbE_X(e^{\lambda X})\leq e^{\frac{\lambda^2\sigma^2}{2}},~ \forall \lambda\in\R.
    \end{equation}
A random vector $X\in\R^n$ is sub-Gaussian with variance proxy $\sigma^2>0$ if $\innerp{\ell,X}$ is sub-Gaussian for any unit vector $\ell$, that is,
    \begin{equation}\label{eq: subG}
        \mbE_X(e^{\lambda\innerp{\ell,X}})\leq e^{\frac{\lambda^2\sigma^2}{2}},~ \forall \lambda\in\R,~\forall \ell\in\mS^{n-1},
    \end{equation}
    where $\mS^{n-1}=\{x\in\R^n:~ \|x\|=1\}$ denotes the Euclidean unit sphere in $\R^n$.
\end{definition}
Sub-Gaussian distributions include a wide range of distributions such as Gaussian distribution, uniform distribution, and any distributions with finite supports as special cases. For a Gaussian random variable, the variance proxy equals its variance. 

In this work, we consider one important concentration inequality for sub-Gaussian distributions known as norm concentration \cite{rigollet2023high,wainwright2019high}, which describes the concentration property of the norm $\|X\|$ for a sub-Gaussian random vector $X$.
\begin{thm} \label{thm: original norm concen}
     For a sub-Gaussian vector $X\in\R^n$ with variance proxy $\sigma^2$, there exist constants $C_1$, $C_2$ such that, for any $\delta\in(0,1)$,
\begin{equation}\label{eq: norm concentration e-net}
    \prob{\|X\|\leq \sigma\sqrt{C_1n+C_2\log\frac{1}{\delta}}} \ge 1-\delta.
\end{equation}
\end{thm}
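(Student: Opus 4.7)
The plan is to circumvent the $\varepsilon$-net/union bound machinery by working with a single \emph{averaged moment generating function}
\begin{equation*}
A(\lambda) \;:=\; \mbE_X \mbE_\ell \bigl[ e^{\lambda \innerp{\ell, X}} \bigr],
\end{equation*}
where $\ell$ is drawn uniformly on $\mS^{n-1}$, independently of $X$. By Fubini and the sub-Gaussian hypothesis~\eqref{eq: subG} applied pointwise in $\ell$, the upper bound $A(\lambda) \leq e^{\lambda^2 \sigma^2 / 2}$ is immediate; the entire argument then rests on producing a matching \emph{lower} bound in terms of $\|X\|$.

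The first step toward the lower bound is to exploit rotational invariance of the uniform measure on the sphere. Conditioning on $X$ with $\|X\|=r$ and writing $X = ru$, the random variable $\innerp{\ell,u}$ has the same law as the first coordinate $\ell_1$ of $\ell$, whose density on $[-1,1]$ is proportional to $(1-x^2)^{(n-3)/2}$. Hence $\mbE_\ell e^{\lambda \innerp{\ell,X}} = \phi(\lambda\|X\|)$ where
\begin{equation*}
\phi(t) \;:=\; c_n \int_{-1}^1 e^{tx}(1-x^2)^{(n-3)/2}\,dx,
\end{equation*}
and the key inequality becomes $\mbE_X \phi(\lambda\|X\|) \leq e^{\lambda^2\sigma^2/2}$. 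Since $\phi$ is monotone increasing on $[0,\infty)$, Markov's inequality yields, for any $t,\lambda > 0$,
\begin{equation*}
\mbP(\|X\| \geq t) \;=\; \mbP\bigl(\phi(\lambda\|X\|) \geq \phi(\lambda t)\bigr) \;\leq\; \frac{e^{\lambda^2\sigma^2/2}}{\phi(\lambda t)}.
\end{equation*}

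The last step is a Laplace-type lower bound on $\phi(t)$ for large $t$: via the substitution $x = 1 - u/t$, together with elementary estimates on $(1-x^2)^{(n-3)/2}$ and Stirling's formula, I expect to obtain $\phi(t) \gtrsim (n/e)^{n/2}(2n)^{-1/2}\,t^{-(n-1)/2}e^{t}$. Plugging this into the Markov bound, optimizing in $\lambda$ by choosing $\lambda = t/\sigma^2$, and writing $r := t^2/(n\sigma^2)$ reduces the claim to
\begin{equation*}
\mbP(\|X\| \geq t) \;\leq\; \sqrt{2}\,\exp\!\Bigl(-\tfrac{n}{2}\bigl(r - 1 - \log r\bigr)\Bigr),
\end{equation*}
which is at most $\delta$ once $t^2 = \sigma^2(C_1 n + C_2 \log(1/\delta))$ for suitable universal constants $C_1 > 1$ and $C_2$ (treating the regimes $\log(1/\delta) \lesssim n$ and $\log(1/\delta) \gg n$ separately, using $r-1-\log r \asymp (r-1)^2$ near $r=1$ and $r-1-\log r \asymp r$ for $r$ large).

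The main obstacle I anticipate is securing the correct polynomial-in-$n$ prefactor in the lower bound for $\phi$: a loose prefactor would leave an $e^{\Theta(n\log n)}$ remainder that cannot be absorbed by $e^{-t^2/(2\sigma^2)}$ in the relevant regime $t \asymp \sigma\sqrt{n}$, and would give the wrong $n$-scaling. The Laplace estimate must be carried out just carefully enough that the polynomial and Stirling contributions combine into the benign $(n/e)^{n/2}$ factor above; everything after that is routine algebra.
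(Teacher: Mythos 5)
Your proposal rests on exactly the same central object as the paper's argument: the sphere-averaged MGF $\mbE_X\mbE_{\ell\sim\mS^{n-1}}(e^{\lambda\innerp{\ell,X}})$, bounded above by $e^{\lambda^2\sigma^2/2}$ via Fubini and below by a function of $\|X\|$ alone, followed by Markov. Where you genuinely diverge is in the key technical step, the lower bound on the spherical average $\phi(t)$. The paper (Lemma~\ref{lemma: AMGF_2}) writes $\frac{d}{dz}\log\phi_n(z)$ as a ratio of modified Bessel functions, invokes the bound $I_{n/2}(z)/I_{n/2-1}(z)\geq\sqrt{1+(n/2z)^2}-n/(2z)$, and uses convexity to get the clean, fully non-asymptotic family $\phi_n(z)\geq(1-\varepsilon^2)^{n/2}e^{\varepsilon z}$ for all $z\geq 0$, which yields explicit constants $C_1=\varepsilon^{-2}\log\frac{1}{1-\varepsilon^2}$, $C_2=2\varepsilon^{-2}$ at once. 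You instead propose a Laplace lower bound on the integral representation $c_n\int_{-1}^1 e^{tx}(1-x^2)^{(n-3)/2}dx$ for large $t$, and apply Markov through the monotonicity of $\phi$ rather than through $\mbE_X(e^{t\|X\|})$. This route does work, and the prefactor worry you flag resolves favorably: near $x=1$ the factor $(1+x)^{(n-3)/2}\approx 2^{(n-3)/2}$ combines with $c_n=\Gamma(n/2)/(\sqrt{\pi}\,\Gamma(\frac{n-1}{2}))$ and Stirling to give precisely the $(n/e)^{n/2}(2n)^{-1/2}$ you need, and any residual $e^{O(n)}$ slack (from truncating the incomplete Gamma integral, or from the regime restriction $t\gtrsim n$, which is the only regime you use) is absorbed into $C_1$; only an $e^{\Theta(n\log n)}$ loss would be fatal, as you correctly observe. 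Your final reduction to $\sqrt{2}\exp(-\tfrac{n}{2}(r-1-\log r))$ with $r=t^2/(n\sigma^2)$ checks out. What the paper's route buys is a bound valid for every $z$ and every $n$ with explicit constants and no asymptotic analysis (and its Theorem~\ref{thm3}, by optimizing over $\varepsilon$, recovers essentially the sharp envelope your Laplace estimate targets directly); what your route buys is independence from Bessel-function machinery, at the price of a more delicate estimate that you have sketched but not carried out. There is no conceptual gap --- the remaining work is the quantitative Laplace bound, plus the minor caveat that the density $(1-x^2)^{(n-3)/2}$ presumes $n\geq 2$, so $n=1$ should be handled separately via $\cosh t\geq e^t/2$.
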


The choice of constants $C_1, C_2$ depends on the proof techniques.
When $n=1$, a direct application of the Markov's inequality points to 
    \[
        \prob{X>r} = \prob{e^{\lambda X}>e^{\lambda r}} \le \frac{\mbE_X(e^{\lambda X})}{e^{\lambda r}} \le e^{\frac{\lambda^2\sigma^2}{2}-\lambda r}
    \]
for any $\lambda>0, r>0$. Minimizing the above over $\lambda$ yields $\prob{X>r} \le e^{-\frac{r^2}{2\sigma^2}}$. Similarly, $\prob{X<-r} \le e^{-\frac{r^2}{2\sigma^2}}$. Combining them with the union bound, we obtain $\prob{\|X\|>r} \le 2 e^{-\frac{r^2}{2\sigma^2}}$. This corresponds to \eqref{eq: norm concentration e-net} with constants $C_1=2\log2$ and $C_2=2$. 

When $n\geq2$, one can follow a similar idea and apply Markov's inequality to bound $\prob{\ell^\top X>r}$ for each $\ell\in\mS^{n-1}$ and then combine them to establish a probabilistic bound for $\|X\|=\max_{\ell\in\mS^{n-1}}\ell^\top X$. However, the union bound is no longer applicable since the maximum is over an unaccountable set $\mS^{n-1}$. To circumvent this issue, the $\varepsilon$-net technique has been developed and has become a standard proof for sub-Gaussian norm concentration \cite{rigollet2023high,vershynin2018high}. Following this technique, one constructs an $\varepsilon$-net $\mathcal{N}$ \cite[Definition 1.17]{rigollet2023high} for the Euclidean unit ball such that $\mathcal{N}$ has finite elements and $\max_{\ell\in\mS^{n-1}}\ell^\top X\leq \frac{1}{1-\varepsilon}\max_{\ell\in\mathcal{N}}\ell^{\top}X$ \cite[Exercise 4.4.2]{vershynin2018high}. The union bound can be applied since $\mathcal{N}$ is a finite set. This method results in \eqref{eq: norm concentration e-net} with constants $C_1$ and $C_2$ determined by 
\begin{equation}\label{eq: tC val}
    C_1=\frac{2\log(1+2/(1-\varepsilon))}{\varepsilon^2},~\,\, C_2=\frac{2}{\varepsilon^2}
\end{equation}
where $\varepsilon$ can take any value in the interval $(0,1)$.
When $\varepsilon=\frac{1}{2}$, we have $C_1=8\log5\approx16$ and $C_2=8$, which is commonly used in existing literature, e.g. \cite{rigollet2023high}. 

\section{New Proof based on Averaged Moment Generating Function} \label{sec: amgf}
To analyze the concentration property of $\|X\|$, the $\varepsilon$-net strategy as described above first analyzes the concentration of the one-dimensional projection $\ell^\top X$ of $X$ using the Markov's inequality and then applies the union bound to establish the concentration bound for $\|X\|$. This gives rise to one question: is there a more direct approach to establish the concentration of $\|X\|$? 

One natural attempt is to bound $\mbE_X (e^{\lambda \|X\|})$ with $\lambda>0$ and apply the Markov's inequality to get bound $\prob{\|X\|>r}\le \mbE_X(e^{\lambda \|X\|})/e^{\lambda r}$ directly. However, it is not easy to bound $\mbE_X (e^{\lambda \|X\|})$ from the definition \eqref{eq: subG} of the sub-Gaussian random vector. In fact, researchers often follow an opposite direction and bound $\mbE_X (e^{\lambda \|X\|})$ using the concentration property of $\|X\|$.

In this paper, we present a new proof of the sub-Gaussian norm concentration inequality that directly analyzes $\|X\|$. The key of our proof is a novel mathematical tool named the averaged moment generating function (AMGF) that bears similarity with $\mbE_X (e^{\lambda \|X\|})$.
\begin{definition}[AMGF]\label{def: amgf}
The Averaged Moment Generating Function $\Phi_X(\lambda)$ of a random vector $X\in\R^n$ is defined as $\Phi_X(\lambda)=\mbE_X(\Phi_{n}(\lambda X))$ where the energy function
    \begin{equation}\label{eq: AMGF}
        \Phi_{n}(\lambda X) = \expectw{\ell\sim\mS^{n-1}}{e^{\lambda\innerp{\ell,X}}}.
    \end{equation}
\end{definition}

The AMGF was first introduced in \cite{altschuler2022concentration} to study Langevin algorithms in sampling problems. The AMGF is an average of the moment generating function (MGF) $\mbE_X(e^{\lambda\innerp{\ell,X}})$ over the unit sphere $\ell\sim\mS^{n-1}$. It can also be viewed as an MGF with the exponential energy function $e^{\lambda\innerp{\ell,X}}$ being replaced by its average $\Phi_{n}(\lambda X)=\expectw{\ell\sim\mS^{n-1}}{e^{\lambda\innerp{\ell,X}}}$. 

Similar to $e^{\lambda \|X\|}$, the energy function $\Phi_{n}(\lambda X)$ only depends on the norm $\|X\|$. 
In fact,  
    \begin{equation}
    \Phi_n(\lambda X)=\phi_n(\|\lambda X\|),
    \end{equation}
where $\phi_n(z)=\expectw{\ell\sim\mS^{n-1}}{e^{\innerp{\ell,z\,\eta}}},~ \forall\eta\in\mS^{n-1}$ 
% is defined as
% \begin{equation} \label{eq: def phi}
%     \phi_n(z)=\expectw{\ell\sim\mS^{n-1}}{e^{\innerp{\ell,z\,\eta}}},~ \forall\eta\in\mS^{n-1}.
% \end{equation}
admits a closed-form expression of $\phi_1(z)= \cosh (z)$ and $\phi_n(z) = \Gamma(n/2)(2/z)^{(n-2)/2}I_{(n-2)/2}(z)$ when $n\ge 2$ where $\Gamma$ denotes the Gamma function and $I$ is the modified Bessel function of the first kind \cite{altschuler2022concentration}.
In addition to these similarities to $e^{\lambda \|x\|}$, we show below that the value of $\Phi_{n}(\lambda X)$ is closely related to $e^{\|\lambda X\|}$.

\begin{lemma}\label{lemma: AMGF_2}
For any $X\in\R^n$ and any $\varepsilon\in(0,1)$, $\Phi_n(\lambda X)$ satisfies
    \begin{equation}\label{eq: lemma amgf2}
        \Phi_n(\lambda X)\geq (1-\varepsilon^2)^{\frac{n}{2}}e^{\varepsilon\|\lambda X\|}.
    \end{equation}
\end{lemma}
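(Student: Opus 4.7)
The plan is first to reduce the inequality to a scalar one. Since the integrand $e^{\lambda\innerp{\ell,X}}$ depends on $\lambda$ and $X$ only through $\innerp{\ell,\eta}$ (with $\eta:=X/\|X\|$) and through $\|\lambda X\|$, and the uniform measure on $\mS^{n-1}$ is rotation invariant, we have $\Phi_n(\lambda X)=\phi_n(\|\lambda X\|)$; and since $\phi_n$ is even (via the symmetry $\ell\mapsto -\ell$), it suffices to prove
\begin{equation*}
\phi_n(z)\ \geq\ (1-\varepsilon^2)^{n/2}\,e^{\varepsilon z}\qquad\text{for every }z\geq 0.
\end{equation*}

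For this scalar claim I would work from the explicit marginal representation $\phi_n(z) = c_n \int_{-1}^{1} e^{zu}(1-u^2)^{(n-3)/2}\,du$, where $c_n=\Gamma(n/2)/(\sqrt\pi\,\Gamma((n-1)/2))$, and apply the Mobius substitution $u=(\varepsilon+v)/(1+\varepsilon v)$, which maps $[-1,1]$ bijectively onto itself fixing $\pm 1$. Elementary calculation yields $1-u^2=(1-\varepsilon^2)(1-v^2)/(1+\varepsilon v)^2$, $du=(1-\varepsilon^2)\,dv/(1+\varepsilon v)^2$, and $zu=\varepsilon z+zv(1-\varepsilon^2)/(1+\varepsilon v)$, so the factors $e^{\varepsilon z}$ and $(1-\varepsilon^2)^{(n-1)/2}$ come out explicitly:
\begin{equation*}
\phi_n(z)\,e^{-\varepsilon z} = c_n\,(1-\varepsilon^2)^{(n-1)/2}\int_{-1}^{1}\frac{e^{zv(1-\varepsilon^2)/(1+\varepsilon v)}}{(1+\varepsilon v)^{n-1}}(1-v^2)^{(n-3)/2}\,dv,
\end{equation*}
reducing the claim to showing the remaining integral is at least $\sqrt{1-\varepsilon^2}/c_n$.

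For that bound, at $z=0$ Jensen's inequality (applied to the convex map $x\mapsto x^{-(n-1)}$ with the symmetric probability measure $d\rho(v):=c_n(1-v^2)^{(n-3)/2}\,dv$, which satisfies $\mbE_\rho[1+\varepsilon V]=1$) gives the integral $\geq 1\geq\sqrt{1-\varepsilon^2}$. For general $z>0$ the plan is to combine this baseline with a Laplace-type analysis near $v=1$, where the monotone exponent $zv(1-\varepsilon^2)/(1+\varepsilon v)$ is maximized, matching it against the vanishing of $(1-v^2)^{(n-3)/2}$ there.

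The main obstacle is supplying the extra $\sqrt{1-\varepsilon^2}$ factor uniformly in $z$: the exponent peaks exactly where the weight $(1-v^2)^{(n-3)/2}$ vanishes, forcing a delicate interpolation between the small-$z$ Jensen regime and the large-$z$ Laplace regime. As a cleaner alternative, one could bypass the substitution and invoke the Gibbs variational identity $\log\phi_n(z)=\sup_\pi\{z\,\mbE_\pi\innerp{\ell,\eta}-\mathrm{KL}(\pi\|\mu_{\mS^{n-1}})\}$, reducing the problem to exhibiting a probability measure $\pi$ on $\mS^{n-1}$ with $\mbE_\pi\innerp{\ell,\eta}\geq\varepsilon$ and $\mathrm{KL}(\pi\|\mu_{\mS^{n-1}})\leq\tfrac{n}{2}\log\tfrac{1}{1-\varepsilon^2}$: a natural candidate is the push-forward of the uniform law by a Gaussian-shift-then-normalize map, for which the data-processing inequality with an optimized isotropic reference $N(0,(1+c^2/n)I_n)$ gives $\mathrm{KL}\leq\tfrac{n}{2}\log(1+c^2/n)$, matching the target exactly when $c=\varepsilon\sqrt{n/(1-\varepsilon^2)}$ --- at the cost of a separate verification of the mean condition against the non-central chi distribution.
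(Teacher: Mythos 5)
Your reduction to the scalar inequality $\phi_n(z)\geq(1-\varepsilon^2)^{n/2}e^{\varepsilon z}$ for $z\ge 0$ is the same first step as the paper's, and your M\"obius-substitution identity is algebraically correct. But neither of your two routes closes the argument, and the second one cannot be closed as stated. In the first route, showing that the transformed integral is at least $\sqrt{1-\varepsilon^2}/c_n$ uniformly in $z$ \emph{is} the lemma: the substitution is an exact identity, so you have only rewritten the claim in new variables. Moreover the Jensen baseline at $z=0$ does not propagate, because pairing $v$ with $-v$ shows that the $z$-derivative of your integral at $z=0$ equals
\begin{equation*}
c_n(1-\varepsilon^2)\int_0^1 v\left[(1+\varepsilon v)^{-n}-(1-\varepsilon v)^{-n}\right](1-v^2)^{\frac{n-3}{2}}\,dv\;<\;0,
\end{equation*}
so the integral strictly decreases from its value at $z=0$; the ``delicate interpolation'' between the Jensen and Laplace regimes that you defer is precisely the content of the inequality (which is nearly tight for $z\asymp n$), and it is not supplied. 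In the second route the Donsker--Varadhan reduction is sound, but your candidate fails the mean condition rather than merely leaving it unverified. Let $\pi$ be the law of $Z/\|Z\|$ with $Z\sim N(c\eta,I_n)$. For $n\ge 2$, differentiating under the expectation gives
\begin{equation*}
\frac{d}{dc}\,\mbE_\pi\innerp{\ell,\eta}\Big|_{c=0}=\frac{n-1}{n}\,\mbE\|G\|^{-1}=\frac{\Gamma\!\left(\frac{n+1}{2}\right)}{\sqrt{n/2}\,\Gamma\!\left(\frac{n}{2}\right)}\cdot\frac{1}{\sqrt{n}}\;<\;\frac{1}{\sqrt{n}}=\frac{d}{dc}\frac{c}{\sqrt{n+c^2}}\Big|_{c=0}
\end{equation*}
by Wendel's inequality $\Gamma(x+\tfrac12)<\sqrt{x}\,\Gamma(x)$; for $n=1$ the mean is $2\,\mbP(N(0,1)\le c)-1\approx\sqrt{2/\pi}\,c<c/\sqrt{1+c^2}$. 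Hence with $c=\varepsilon\sqrt{n/(1-\varepsilon^2)}$ one has $\mbE_\pi\innerp{\ell,\eta}<\varepsilon$ for all small $\varepsilon$, and enlarging $c$ to repair the mean breaks the KL budget $\tfrac{n}{2}\log\tfrac{1}{1-\varepsilon^2}$ delivered by data processing.

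By contrast, the paper's proof is short and complete: it uses the identity $\frac{d}{dz}\log\phi_n(z)=I_{n/2}(z)/I_{n/2-1}(z)$, the Amos lower bound $I_{n/2}(z)/I_{n/2-1}(z)\ge\sqrt{1+(n/2z)^2}-n/(2z)$, integrates this bound, and then takes the tangent line of the resulting convex function at $z_0=\varepsilon n/(1-\varepsilon^2)$, which reproduces the constants $\varepsilon$ and $\tfrac{n}{2}\log(1-\varepsilon^2)$ exactly (with $n=1$ handled separately via $\cosh$). If you wish to pursue your integral representation, you would in effect have to reprove a bound of that strength for intermediate $z$; as written, the key step is missing in route one and false in route two.
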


\begin{proof}
In view of $ \Phi_n(\lambda X)=\phi_n(\|\lambda X\|)$, it suffices to show that 
\begin{equation}\label{eq:phin}
    \phi_n(z)\geq (1-\varepsilon^2)^{\frac{n}{2}}e^{\varepsilon z}
\end{equation} 
holds for any $z\geq0$ and any $\varepsilon\in(0,1)$. When $n=1$, $\phi_1(z)=\cosh(z)$ and \eqref{eq: lemma amgf2} becomes  
    \begin{equation}\label{eq: amgf lb n=1}
        \cosh(z)\geq \sqrt{1-\varepsilon^2}e^{\varepsilon z},~\forall z\geq0,~ \forall \varepsilon\in(0,1).
    \end{equation}
This is a straightforward consequence of the convexity of $\log\cosh(z)-\log\left(\sqrt{1-\varepsilon^2}e^{\varepsilon z}\right)$ with respect to $z$. We next focus on the case of $n\geq2$.

As pointed out in \cite{altschuler2022concentration}, directly from the definition, the derivative of $\log\phi_n$ admits an elegant expression
    \begin{equation}\label{eq: dlog_amgf}
    \frac{d}{dz}\log\phi_n(z)=\frac{I_{\frac{n}{2}}(z)}{I_{\frac{n}{2}-1}(z)}.
    \end{equation}
The ratio of Bessel functions can be bounded below as \cite{Amos1974Bessel}
    \begin{equation}\label{eq: bessel ratio lb}
        \frac{I_{\frac{n}{2}}(z)}{I_{\frac{n}{2}-1}(z)}\geq \sqrt{1+\left(\frac{n}{2z}\right)^2}-\frac{n}{2z}.
    \end{equation}
Denote $g(z)=\sqrt{1+(\frac{n}{2z})^2}-\frac{n}{2z}$ and $G(z)=\int_{0}^z g(y)dy$, then, by \eqref{eq: dlog_amgf}-\eqref{eq: bessel ratio lb},
\begin{equation}\label{eq: log_amgf>=}
    \log \phi_n(z)\geq G(z)+\log \phi_n(0)= G(z), \quad \forall z\ge 0. 
\end{equation}
Since $g(z)$ is monotonically increasing over $z>0$, $G(z)$ is convex on $z>0$. Thus, by the definition of convexity, given any $z_0>0$,
\begin{equation}\label{eq: convex G}
    G(z)\geq g(z_0)(z-z_0)+G(z_0).
\end{equation}
Set $z_0=\frac{\varepsilon n}{1-\varepsilon^2}$, then
\begin{equation}\label{eq:gz0}
    g(z_0)=\varepsilon,\quad G(z_0)=\frac{n\varepsilon^2}{1-\varepsilon^2}+\frac{n}{2}\log(1-\varepsilon^2).
\end{equation}
Plugging \eqref{eq:gz0} into \eqref{eq: log_amgf>=}-\eqref{eq: convex G} yields
\begin{equation}\label{eq: log_amgf lin lb}
    \log \phi_n(z)\geq \varepsilon z + \frac{n}{2}\log(1-\varepsilon^2).
\end{equation}
The conclusion \eqref{eq:phin} and thus \eqref{eq: lemma amgf2} follow by taking the exponential of \eqref{eq: log_amgf lin lb}. This completes the proof.
\end{proof}

By Lemma \ref{lemma: AMGF_2}, the AMGF can be viewed as a surrogate function of $\mbE_X (e^{\lambda \|X\|})$. One distinguishing feature of the AMGF compared with $\mbE_X (e^{\lambda \|X\|})$ is that the AMGF shares the same bound as the MGF in \eqref{eq: subG}. More specifically, by \eqref{eq: subG}, $\mbE_X(e^{\lambda\innerp{\ell,X}})\leq e^{\frac{\lambda^2\sigma^2}{2}}$ for any $\ell\in\mS^{n-1}$, and thus the AMGF satisfies
    \begin{equation}\label{eq:AMGFbound}
        \Phi_X(\lambda)=\mbE_X(\Phi_{n}(\lambda X))= \mbE_X\mbE_{\ell\sim\mS^{n-1}}(e^{\lambda\innerp{\ell,X}})=\mbE_{\ell\sim\mS^{n-1}}\mbE_X(e^{\lambda\innerp{\ell,X}})\leq e^{\frac{\lambda^2\sigma^2}{2}}.
    \end{equation}
By combining \eqref{eq:AMGFbound} with Lemma \ref{lemma: AMGF_2}, an upper bound on $\mbE_X (e^{\lambda \|X\|})$ can be established, based on which Markov's inequality can be applied to bound $\prob{\|X\|>r}$, formalized as follows. 

\begin{thm}\label{thm: norm concentration}
    Let $X\in\R^n$ be a sub-Gaussian vector with variance proxy $\sigma^2$, then for any $\delta\in(0,1)$ and any $\varepsilon\in(0,1)$,
    \begin{equation}\label{eq: thm1}
        \prob{\|X\|\leq \sigma\sqrt{\frac{\log\frac{1}{1-\varepsilon^2}}{\varepsilon^2} n+\frac{2}{\varepsilon^2}\log\frac{1}{\delta}}}\ge 1-\delta.
    \end{equation}
\end{thm}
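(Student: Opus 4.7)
The plan is to combine Lemma \ref{lemma: AMGF_2} with the AMGF bound in \eqref{eq:AMGFbound} to produce an exponential moment bound on $\|X\|$ itself, and then apply a standard Chernoff-style argument. Concretely, taking expectation over $X$ in \eqref{eq: lemma amgf2} and using \eqref{eq:AMGFbound} yields
\begin{equation}
(1-\varepsilon^2)^{n/2}\,\mbE_X\!\left(e^{\varepsilon\lambda\|X\|}\right)\;\le\;\mbE_X(\Phi_n(\lambda X))=\Phi_X(\lambda)\;\le\;e^{\lambda^2\sigma^2/2},
\end{equation}
valid for every $\lambda\in\R$ and $\varepsilon\in(0,1)$. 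This is the key inequality: it plays the role of the MGF bound in the scalar case, but directly for $\|X\|$ rather than for projections $\innerp{\ell,X}$.

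Next I would apply Markov's inequality to the non-negative random variable $e^{\varepsilon\lambda\|X\|}$ with $\lambda>0$, obtaining
\begin{equation}
\prob{\|X\|>r}\;\le\;(1-\varepsilon^2)^{-n/2}\,\exp\!\left(\tfrac{\lambda^2\sigma^2}{2}-\varepsilon\lambda r\right).
\end{equation}
The quadratic in $\lambda$ in the exponent is minimized at $\lambda^\star=\varepsilon r/\sigma^2$, which gives the clean bound
\begin{equation}
\prob{\|X\|>r}\;\le\;(1-\varepsilon^2)^{-n/2}\,\exp\!\left(-\tfrac{\varepsilon^2 r^2}{2\sigma^2}\right).
\end{equation}

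Finally, I would set the right-hand side equal to $\delta$ and solve for $r$. Taking logarithms and rearranging gives
\begin{equation}
r^2\;=\;\sigma^2\!\left(\frac{\log\frac{1}{1-\varepsilon^2}}{\varepsilon^2}\,n+\frac{2}{\varepsilon^2}\log\frac{1}{\delta}\right),
\end{equation}
which is exactly the threshold in \eqref{eq: thm1}. Since $\varepsilon\in(0,1)$ is arbitrary, the statement of Theorem \ref{thm: norm concentration} follows.

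Frankly, I do not anticipate any serious obstacle here: all the real analytic work was absorbed into Lemma \ref{lemma: AMGF_2}, and what remains is the standard Chernoff recipe (multiply by exponential, apply Markov, optimize the free parameter). The only mild subtleties are bookkeeping ones---keeping the prefactor $(1-\varepsilon^2)^{-n/2}$ properly in place throughout the optimization, and noting that $\varepsilon$ must stay strictly below $1$ to keep that prefactor finite (which is exactly the hypothesis of the theorem). Everything else is routine algebra.
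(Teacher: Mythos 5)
Your proposal is correct and follows essentially the same route as the paper: combine Lemma \ref{lemma: AMGF_2} with the upper bound \eqref{eq:AMGFbound} to get an exponential moment bound on $\|X\|$, apply Markov's inequality, optimize the free parameter, and solve for $r$. The only cosmetic difference is that the paper substitutes $t=\varepsilon|\lambda|$ before optimizing while you optimize directly over $\lambda$; the resulting exponent $-\varepsilon^2 r^2/(2\sigma^2)$ and the final threshold are identical.
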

\begin{proof}
    By Lemma \ref{lemma: AMGF_2}, the AMGF of $X$ satisfies
    \begin{equation} \label{eq: AMGF lower}
            \Phi_X(\lambda)\geq (1-\varepsilon^2)^{\frac{n}{2}}\expectw{X}{e^{\varepsilon\|\lambda X\|}}.
    \end{equation}
In view of \eqref{eq:AMGFbound}, we obtain
    \begin{equation}\label{eq: MGF upper}
        \expectw{X}{e^{\varepsilon\|\lambda X\|}}\leq (1-\varepsilon^2)^{-\frac{n}{2}}\exp\left(\frac{\lambda^2\sigma^2}{2}\right).
    \end{equation}
    Denote $t=\varepsilon|\lambda|>0$, then \eqref{eq: MGF upper} becomes
    \begin{equation} \label{eq: bound E(exp|X|)}
        \expectw{X}{e^{t\|X\|}}\leq (1-\varepsilon^2)^{-\frac{n}{2}}\exp\left(\frac{\sigma^2t^2}{2\varepsilon^2}\right).
    \end{equation}
    
    Applying Markov's inequality, we obtain that, for any $r>0$ and $\varepsilon\in(0,1)$,
    \begin{equation}\label{eq: markov}
        \begin{split}
            \prob{\|X\|\leq r} =&~ \prob{e^{t\|X\|}\leq e^{tr}}\geq 1-\frac{\expectw{X}{e^{t\|X\|}}}{e^{tr}} \\
            \geq&~ 1-(1-\varepsilon^2)^{-\frac{n}{2}}\exp(\frac{\sigma^2t^2}{2\varepsilon^2}- rt).
        \end{split}
    \end{equation}
Minimizing the exponent $\frac{\sigma^2t^2}{2\varepsilon^2}- rt$ over $t$ yields the minimum $-\frac{\varepsilon^2r^2}{2\sigma^2}$. Plugging the minimum into \eqref{eq: markov} we arrive at
\begin{equation}\label{eq: P(X<r)}
    \prob{\|X\|\leq r}\geq 1-(1-\varepsilon^2)^{-\frac{n}{2}}e^{-\frac{\varepsilon^2r^2}{2\sigma^2}}.
\end{equation}
 Define $\delta=(1-\varepsilon^2)^{-\frac{n}{2}}e^{-\frac{\varepsilon^2r^2}{2\sigma^2}}$, then $r$ can be expressed as
 $$r=\sigma\sqrt{\frac{\log\frac{1}{1-\varepsilon^2}}{\varepsilon^2}n+\frac{2}{\varepsilon^2}\log\frac{1}{\delta}}.$$ 
The conclusion \eqref{eq: thm1} follows by plugging this expression of $r$ into \eqref{eq: P(X<r)}. This completes the proof.
 \end{proof}

Clearly, Theorem \ref{thm: norm concentration} is a special case of Theorem \ref{thm: original norm concen} with constants
    \begin{equation}\label{eq:nC12}
        C_1 = \frac{\log\frac{1}{1-\varepsilon^2}}{\varepsilon^2},\quad C_2= \frac{2}{\varepsilon^2}.
    \end{equation}
The choice \eqref{eq:nC12} of $C_1, C_2$ appears to be better than \eqref{eq: tC val} obtained by the $\varepsilon$-net technique. Indeed, \eqref{eq:nC12} shares the same $C_2$ as \eqref{eq: tC val}, and $C_1$ in \eqref{eq:nC12} is smaller by
    \[
        \frac{2\log(1+\frac{2}{1-\varepsilon})}{\varepsilon^2}=\frac{\log(1+\frac{4}{1-\varepsilon}+\frac{4}{(1-\varepsilon)^2})}{\varepsilon^2}
        > \frac{\log(1+\frac{\varepsilon^2}{1-\varepsilon^2})}{\varepsilon^2}=\frac{\log(\frac{1}{1-\varepsilon^2})}{\varepsilon^2}
    \]
for any $\varepsilon \in (0,\,1)$.  

Theorem \ref{thm: norm concentration} represents a class of concentration inequalities parameterized by $\varepsilon \in (0,\,1)$. In practice, one can choose $\varepsilon$ according to the value of $n$ and $\delta$. With slight modifications, one can derive a more elegant concentration inequality based on AMGF as follows.
\begin{thm} \label{thm3}
    Let $X\in\R^{n}$ be a sub-Gaussian vector with variance proxy $\sigma^2$, then for any $\delta\in(0,1)$,
    \begin{equation}\label{eq:concen2}
        \prob{\|X\|\leq \sigma\left(\sqrt{n}+\sqrt{2\log(1/\delta)}\right)} \geq 1-\delta.
    \end{equation}
\end{thm}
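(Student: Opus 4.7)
The plan is to revisit inequality \eqref{eq: P(X<r)} from the proof of Theorem~\ref{thm: norm concentration}, which is valid for every $\varepsilon \in (0,1)$, and, rather than reading off the bound with $\varepsilon$ as a free parameter as was done there, to optimize $\varepsilon$ for the specific target radius $r = \sigma(\sqrt{n} + \sqrt{2\log(1/\delta)})$. In other words, I would pick the single best $\varepsilon$ for each $(r,n,\sigma)$ and then verify that the resulting tail bound is at most $\delta$ at the claimed radius.

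Concretely, after rewriting \eqref{eq: P(X<r)} as
\[
    \prob{\|X\| > r} \leq (1-\varepsilon^2)^{-n/2} \exp\left(-\frac{\varepsilon^2 r^2}{2\sigma^2}\right),
\]
I would differentiate the right-hand side in $\varepsilon$ and solve the resulting equation, obtaining the closed-form minimizer $\varepsilon_\star^2 = 1 - n\sigma^2/r^2$. This value lies in $(0,1)$ because $\delta < 1$ forces $r > \sigma\sqrt{n}$. Substituting $\varepsilon_\star$ back and introducing the scaling $u := r/(\sigma\sqrt{n}) > 1$, the bound collapses to the compact form
\[
    \prob{\|X\| > r} \leq \exp\left(-\frac{n}{2}\left(u^2 - 1 - 2\log u\right)\right).
\]

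To finish, it remains to show that this is at most $\delta$ at the target radius. Writing $u = 1 + v$ with $v := \sqrt{2\log(1/\delta)/n}$, so that $\frac{n}{2}v^2 = \log(1/\delta)$, the required inequality reduces to $(1+v)^2 - 1 - 2\log(1+v) \geq v^2$, which simplifies to the elementary bound $\log(1+v) \leq v$ for $v \geq 0$.

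I do not foresee a substantive obstacle: the argument is essentially an algebraic rearrangement of \eqref{eq: P(X<r)}, and the key insight is that the parameter $\varepsilon$ there can be optimized independently for each $r$ rather than fixed globally. The only care needed is the scaling $u = r/(\sigma\sqrt{n})$, which makes the exponent manifestly match the target quantity $\frac{n}{2}v^2 = \log(1/\delta)$ and exposes $\log(1+v) \leq v$ as the single analytic fact doing all of the work.
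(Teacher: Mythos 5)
Your argument is correct, and it reaches the theorem by a genuinely different route from the paper. You optimize $\varepsilon$ \emph{after} the Markov step, working directly with the tail bound \eqref{eq: P(X<r)}: the exact minimizer $\varepsilon_\star^2 = 1 - n\sigma^2/r^2$ is legitimate (it lies in $(0,1)$ precisely because $r>\sigma\sqrt{n}$ at the target radius), the resulting expression $\exp\bigl(-\tfrac{n}{2}(u^2-1-2\log u)\bigr)$ with $u=r/(\sigma\sqrt{n})$ is computed correctly, and the final reduction to $\log(1+v)\leq v$ checks out. The paper instead optimizes $\varepsilon$ \emph{before} applying Markov: it relaxes $(1-\varepsilon^2)^{-n/2}$ via $-\log(1-\varepsilon^2)\leq \varepsilon^2/(1-\varepsilon^2)$, minimizes over $\varepsilon$ inside the MGF bound \eqref{eq: bound E(exp|X|)} using Cauchy's inequality to obtain $\expectw{X}{e^{t\|X\|}}\leq \exp\bigl(\tfrac{1}{2}(\sqrt{n}+\sigma t)^2-\tfrac{n}{2}\bigr)$, and only then applies Markov with $t=r/\sigma$. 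The two routes swap the order of the optimizations over $t$ and $\varepsilon$ and buy slightly different things: the paper's route yields a clean, reusable bound on the moment generating function of $\|X\|$ itself, whereas your exact (unrelaxed) optimization of $\varepsilon$ produces an intermediate tail bound $\exp\bigl(-\tfrac{n}{2}(u^2-1-2\log u)\bigr)$ that is strictly sharper than $e^{-r'^2/2}$ (by a factor $e^{-n(v-\log(1+v))}$), closer in spirit to the Hsu--Kakade--Zhang bound cited in the paper, before you deliberately discard that surplus via $\log(1+v)\leq v$ to match the stated form.
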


\begin{proof}
   Combining \eqref{eq: bound E(exp|X|)} and the fact that $\log(1-\varepsilon^2)\geq \frac{\varepsilon^2}{\varepsilon^2-1}$ holds for any $\varepsilon\in(0,1)$, we know for any $\varepsilon\in(0,1)$ and $t>0$:
    \begin{equation}\label{eq:opteps}
        \expectw{X}{e^{t\|X\|}}
        \leq\min_{\varepsilon\in(0,1)}\exp\left(\frac{n\varepsilon^2}{2(1-\varepsilon^2)}+\frac{\sigma^2t^2}{2\varepsilon^2}\right).
    \end{equation}
By Cauchy's inequality:
    \begin{equation}\label{eq: MGF_t upper}
    \begin{split}
       \exp\left(\frac{n\varepsilon^2}{2(1-\varepsilon^2)}+\frac{\sigma^2t^2}{2\varepsilon^2}\right)
        =& \exp\left( \frac{1}{2}\left\lVert\begin{bmatrix} \frac{\sqrt{n}}{\sqrt{1-\varepsilon^2}} \\ \frac{\sigma t}{\varepsilon} \end{bmatrix}\right\rVert^2 \cdot \left\lVert\begin{bmatrix} \sqrt{1-\varepsilon^2} \\ \sqrt{\varepsilon^2} \end{bmatrix}\right\rVert^2 -\frac{n}{2}\right) \\
        \geq& \exp\left(\frac{1}{2}(\sqrt{n}+\sigma t)^2-\frac{n}{2}\right),
    \end{split}
    \end{equation}
    where the minimizer is $\varepsilon^*=\sqrt{\frac{\sigma t}{\sigma t+\sqrt{n}}}\in(0,1)$. By Markov's inequality, \eqref{eq: MGF_t upper} implies that for any $t>0$:
    \begin{equation}\label{eq: sub-G markov}
    \begin{split}
         &\prob{\|X\|\geq \sigma(\sqrt{n}+r)} \leq \frac{\expectw{X}{e^{t\| X\|}}}{e^{\sigma t(\sqrt{n}+r)}} \\
         \leq& \exp\left( \frac{1}{2}(\sqrt{n}+\sigma t)^2-\frac{n}{2}-\sigma t(\sqrt{n}+r) \right)=\exp\left( \frac{1}{2}\sigma^2t^2-\sigma rt \right).
    \end{split}
    \end{equation}
Set $t=\frac{r}{\sigma}$ in \eqref{eq: sub-G markov}, then it follows
    \begin{equation*}
        \prob{\|X\|\geq \sigma(\sqrt{n}+r)} \leq e^{-\frac{r^2}{2}}.
    \end{equation*}
The proof is concluded by setting $\delta=e^{-\frac{r^2}{2}}$.
\end{proof}
Unlike \eqref{eq: thm1}, the concentration inequality \eqref{eq:concen2} does not depend on $\varepsilon$ due to the additional optimization step \eqref{eq:opteps}. In practice, applying Theorem \ref{thm: norm concentration} or Theorem \ref{thm3} can depend on the value of $n$ and $\delta$. 

Interestingly, Theorem \ref{thm3} coincides with a concentration bound established in \cite[Remark 6]{zhivotovskiy2024dimension} based on variational inequality. Theorem \ref{thm3} is only slightly worse than the tightest existing result of sub-Gaussian norm concentration stated in \cite[Theorem 1]{hsu2012tail} which states that $\prob{\|X\|\geq \sigma\sqrt{n+2\sqrt{n\log(1/\delta)}+2\log(1/\delta)}}\le \delta$. It is worth pointing out that the proof in \cite{hsu2012tail} is based on bounding $\mbE_X\mbE_{\ell\sim \mathcal{N}(0, I)}(e^{\lambda\innerp{\ell,X}})$, which is an average of the MGF over the standard Gaussian distribution rather than the uniform distribution on $\mS^{n-1}$ as in AMGF.

\section{Generalization to Sub-Gaussian Random Matrices}
In this section, we extend our AMGF-based method to study the norm concentration property of sub-Gaussian matrices. In particular, we focus on the operator norm, which plays a pivotal role in applications related to random matrix theory. Recall that a random matrix $A\in\R^{m\times n}$ is said to be sub-Gaussian with variance proxy $\sigma^2$ if $\mbE(A)=0$ and for any $\lambda\in\R$ \cite[Section 1.2]{rigollet2023high}
\begin{equation} \label{eq: def subG mat}
    \expectw{A}{e^{\lambda u^\top Av}}\leq e^{\frac{\lambda^2\sigma^2}{2}},\quad\forall u\in\mS^{m-1},~\forall v\in\mS^{n-1}.
\end{equation}

Following Definition \ref{def: amgf}, we define the AMGF of a random matrix $A$ as $\Phi_A(\lambda)=\mbE_A(\Phi_{m,n}(\lambda A))$, where the energy function
\begin{equation}\label{eq: def mat AMGF}
    \Phi_{m,n}(\lambda A):=\expectw{u\sim\mS^{m-1},\\ v\sim\mS^{n-1}}{e^{\lambda u^\top Av}},\quad A\in\R^{m\times n}.
\end{equation}
Similar to the vector setting, the property of exponential growth holds for $\Phi_{m,n}(\lambda A)$.
\begin{lemma} \label{lemma: AMGF_mat}
    Given $A\in\R^{m\times n}$, for any $\varepsilon\in(0,1)$, $\Phi_{m,n}(\lambda A)$ satisfies
    \begin{equation}
        \Phi_{m,n}(\lambda A)\geq (1-\varepsilon^2)^{\frac{m+n}{2}}e^{\varepsilon^2\|\lambda A\|}.
    \end{equation}
\end{lemma}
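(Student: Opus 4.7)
The plan is to iterate the one-dimensional inequality $\phi_k(z)\geq (1-\varepsilon^2)^{k/2}e^{\varepsilon z}$ from (\ref{eq:phin}) twice, once in the $u$-direction and once in the $v$-direction. Each iteration contributes a factor of $(1-\varepsilon^2)^{\cdot/2}$, and composing them produces both the prefactor $(1-\varepsilon^2)^{(m+n)/2}$ and the squared exponent $\varepsilon\cdot\varepsilon=\varepsilon^2$ appearing in the statement.

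First I would fix $v\in\mS^{n-1}$ and carry out the inner average only over $u$. Using the identity $\mbE_{u\sim\mS^{m-1}}(e^{\innerp{u,w}})=\phi_m(\|w\|)$ with $w=\lambda Av$, followed by (\ref{eq:phin}) in dimension $m$,
\[
\mbE_{u\sim\mS^{m-1}}\!\left(e^{\lambda u^\top Av}\right)=\phi_m(|\lambda|\|Av\|)\geq (1-\varepsilon^2)^{m/2}e^{\varepsilon|\lambda|\|Av\|}.
\]
Taking the expectation over $v$ then gives
\[
\Phi_{m,n}(\lambda A)\geq (1-\varepsilon^2)^{m/2}\,\mbE_{v\sim\mS^{n-1}}\!\left(e^{\varepsilon|\lambda|\|Av\|}\right).
\]

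Second, I would collapse the random vector norm $\|Av\|$ into a linear functional by choosing a fixed top left singular vector $u^*\in\mS^{m-1}$ of $A$. Cauchy--Schwarz yields $\|Av\|\geq u^{*\top}Av$ (trivially so when the right-hand side is negative, since $\|Av\|\geq 0$), and with $A^\top u^*=\|A\|\,v^*$ for the matching right singular vector $v^*\in\mS^{n-1}$,
\[
\mbE_v\!\left(e^{\varepsilon|\lambda|\|Av\|}\right)\geq \mbE_v\!\left(e^{\varepsilon|\lambda|\|A\|\innerp{v^*,v}}\right)=\phi_n(\varepsilon|\lambda|\|A\|).
\]
A second application of (\ref{eq:phin}), now in dimension $n$ with argument $\varepsilon|\lambda|\|A\|$, lower bounds this by $(1-\varepsilon^2)^{n/2}e^{\varepsilon^2\|\lambda A\|}$. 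Multiplying the two $(1-\varepsilon^2)^{\cdot/2}$ factors together closes the argument.

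The main worry is the apparent slack in the bound $\|Av\|\geq u^{*\top}Av$, which throws away the whole half of $\mS^{n-1}$ on which $u^{*\top}Av<0$. However, the uniform law on $\mS^{n-1}$ is invariant under $v\leftrightarrow -v$, so $\mbE_v(e^{t u^{*\top}Av})=\phi_n(t\|A\|)$ is insensitive to the sign of the linear functional; the $(1-\varepsilon^2)^{n/2}$ prefactor already appearing in the target is exactly the bookkeeping cost of this sign-averaging, so no sharper argument is required to match the claim.
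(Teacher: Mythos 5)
Your proposal is correct and is essentially the paper's own proof in different coordinates: the paper first rotates via the SVD to reduce $A$ to $\Sigma$ and then applies Lemma~\ref{lemma: AMGF_2} in dimension $m$, bounds $\|\Sigma v\|\geq\sigma_1 v_1$, and applies the one-dimensional bound \eqref{eq:phin} again in dimension $n$, which is exactly your two-stage averaging with the linearization $\|Av\|\geq u^{*\top}Av=\|A\|\innerp{v^*,v}$. The ``worry'' you raise at the end is moot for correctness, since $e^{\varepsilon|\lambda|\|Av\|}\geq e^{\varepsilon|\lambda|u^{*\top}Av}$ holds pointwise in $v$ and expectations preserve pointwise inequalities.
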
 

\begin{proof}
Without loss of generality, assume $m\leq n$. Consider the singular value decomposition $A=U\Sigma V$, where $U\in\R^{m\times m}$ and $V\in\R^{n\times n}$ are unitary matrices, and $\Sigma=\begin{bmatrix} \text{diag}(\sigma_1,\dots,\sigma_m) &  0_{m\times(n-m)}\end{bmatrix}$ with singular values $\sigma_1,\dots,\sigma_m$ arranged in a descending order. Note the operator norm of $A$ satisfies $\|A\|=\sigma_1$. Since for any unit vectors $u\in\mS^{m-1}$ and $v\in\mS^{n-1}$, $u^\top U$ and $Vv$ remain unit vectors, $\Phi_{m,n}(A)$ can be expressed as
    \begin{equation} \label{eq: bound mat AMGF step 1}
    \begin{split}
        &\Phi_{m,n}(\lambda A)=\expectw{v\sim\mS^{n-1}}{\expectw{u\sim\mS^{m-1}}{e^{\lambda\innerp{u,\Sigma v}}}}\\
        =&\,\,\expectw{v\sim\mS^{n-1}}{\Phi_m(\lambda \Sigma v)}\geq (1-\varepsilon^2)^{\frac{m}{2}}\expectw{v\sim\mS^{n-1}}{e^{\varepsilon \|\lambda\Sigma v\|}},
    \end{split}
    \end{equation}
    where the last ``$\geq$'' follows from Lemma \ref{lemma: AMGF_2}. 
    Since $\|\Sigma v\|\geq \sigma_1v_1=\|A\|\innerp{\ell_1,v}$ where $\ell_1$ is the unit vector along the first coordinate,
    \eqref{eq: bound mat AMGF step 1} can be further bounded as
    \begin{equation}
        \begin{split}
            &\Phi_{m,n}(\lambda A)\geq (1-\varepsilon^2)^{\frac{m}{2}}\expectw{v\sim\mS^{n-1}}{e^{\varepsilon \|\lambda\Sigma v\|}} \\
            \geq & (1-\varepsilon^2)^{\frac{m}{2}}\expectw{v\sim\mS^{n-1}}{e^{\varepsilon\|\lambda A\| \innerp{\ell_1,v}}}=(1-\varepsilon^2)^{\frac{m}{2}}\Phi_n(\varepsilon\|\lambda A\|\ell_1) \\
            \geq & (1-\varepsilon^2)^{\frac{m+n}{2}}e^{\varepsilon^2\|\lambda A\|}.
        \end{split}
    \end{equation}
\end{proof}

Based on Lemma \ref{lemma: AMGF_mat}, we present the following norm concentration inequality for sub-Gaussian matrices.

\begin{thm} \label{thm: mat norm}
    Let $A\in\R^{m\times n}$ be a sub-Gaussian matrix with variance proxy $\sigma^2$, then for any $r>0$:
    \begin{equation*}
         \prob{\|A\|\leq \sigma\sqrt{\frac{\log\frac{1}{1-\varepsilon^2}}{\varepsilon^4} (m+n)+\frac{2}{\varepsilon^4}\log\frac{1}{\delta}}}\ge 1-\delta.
    \end{equation*}
\end{thm}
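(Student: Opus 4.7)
The plan is to mirror the vector-case argument from Theorem \ref{thm: norm concentration}, substituting Lemma \ref{lemma: AMGF_mat} for Lemma \ref{lemma: AMGF_2} and using the matrix AMGF $\Phi_A(\lambda)$ in place of its vector analogue. The first step is to show that the matrix sub-Gaussian bound \eqref{eq: def subG mat} lifts to a uniform bound on $\Phi_A(\lambda)$: by Fubini's theorem, swapping the expectation over $A$ with the double expectation over $u\sim\mS^{m-1}$ and $v\sim\mS^{n-1}$ gives
$$\Phi_A(\lambda)=\expectw{u,v}{\expectw{A}{e^{\lambda u^\top A v}}}\leq e^{\lambda^2\sigma^2/2},$$
exactly as in \eqref{eq:AMGFbound}.

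Next, I would combine this upper bound with the lower bound from Lemma \ref{lemma: AMGF_mat}, which yields
$$(1-\varepsilon^2)^{\frac{m+n}{2}}\expectw{A}{e^{\varepsilon^2\|\lambda A\|}}\leq e^{\lambda^2\sigma^2/2}.$$
A change of variables $t=\varepsilon^2|\lambda|$ (so $\lambda^2=t^2/\varepsilon^4$) then produces a matrix analogue of \eqref{eq: bound E(exp|X|)}:
$$\expectw{A}{e^{t\|A\|}}\leq (1-\varepsilon^2)^{-\frac{m+n}{2}}\exp\!\left(\frac{\sigma^2 t^2}{2\varepsilon^4}\right).$$

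From here the argument is purely computational and parallels the proof of Theorem \ref{thm: norm concentration}. Applying Markov's inequality to $e^{t\|A\|}$ gives
$$\prob{\|A\|>r}\leq (1-\varepsilon^2)^{-\frac{m+n}{2}}\exp\!\left(\frac{\sigma^2 t^2}{2\varepsilon^4}-rt\right),$$
and minimizing the exponent over $t>0$ (minimizer $t^\star=r\varepsilon^4/\sigma^2$) yields the minimum value $-r^2\varepsilon^4/(2\sigma^2)$. Setting $\delta=(1-\varepsilon^2)^{-(m+n)/2}e^{-r^2\varepsilon^4/(2\sigma^2)}$ and solving for $r$ produces exactly the expression stated in the theorem.

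I do not expect any real obstacle: the only novel ingredient is Lemma \ref{lemma: AMGF_mat}, which is already established, and the remaining steps are a mechanical adaptation of the vector proof with $n\mapsto m+n$ and $\varepsilon\mapsto\varepsilon^2$ in the exponential rate. The mild subtlety worth flagging is the appearance of $\varepsilon^4$ (rather than $\varepsilon^2$) in the final denominators, which originates from the fact that Lemma \ref{lemma: AMGF_mat} carries an $e^{\varepsilon^2\|\lambda A\|}$ factor instead of the $e^{\varepsilon\|\lambda X\|}$ factor of Lemma \ref{lemma: AMGF_2}; this is exactly what the change of variables $t=\varepsilon^2|\lambda|$ absorbs.
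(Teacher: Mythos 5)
Your proposal is correct and follows essentially the same route as the paper's proof: bound $\Phi_A(\lambda)$ above via Fubini and the sub-Gaussian definition, below via Lemma \ref{lemma: AMGF_mat}, substitute $t=\varepsilon^2|\lambda|$, and finish with Markov's inequality optimized at $t=\varepsilon^4 r/\sigma^2$. No gaps.
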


\begin{proof}
    By Lemma \ref{lemma: AMGF_mat}, $\Phi_A(\lambda)\geq (1-\varepsilon^2)^{\frac{m+n}{2}}\expectw{A}{e^{\varepsilon^2\|\lambda A\|}}$.
    By the definition of sub-Gaussian matrix, $\Phi_A(\lambda)\leq e^{\frac{\lambda^2\sigma^2}{2}}$. Combining the upper and lower bounds on $\Phi_A(\lambda)$, we get
    \begin{equation}\label{eq: MGF_|A| upper}
        \expectw{A}{e^{\varepsilon^2\|\lambda A\|}}\leq (1-\varepsilon^2)^{-\frac{m+n}{2}}\exp\left(\frac{\lambda^2\sigma^2}{2}\right).
    \end{equation}
    Define $t=\varepsilon^2|\lambda|$, then \eqref{eq: MGF_|A| upper} becomes
    \begin{equation}\label{eq: MGF_t|A| upper}
        \expectw{A}{e^{t\| A\|}}\leq (1-\varepsilon^2)^{-\frac{m+n}{2}}\exp\left(\frac{\sigma^2t^2}{2\varepsilon^4}\right).
    \end{equation}
    By Markov's inequality, \eqref{eq: MGF_t|A| upper} implies that for any $t>0$:
    \begin{equation}\label{eq: sub-G ||A|| markov}
    \begin{split}
         \prob{\|A\|\geq r} \leq \frac{\expectw{A}{e^{t\| A\|}}}{e^{tr}}\leq  (1-\varepsilon^2)^{-\frac{m+n}{2}}\exp\left(\frac{\sigma^2t^2}{2\varepsilon^4}-rt\right).
    \end{split}
    \end{equation}
    Set $t=\frac{\varepsilon^4r}{\sigma^2}$ in \eqref{eq: sub-G ||A|| markov}, which is the minimizer of the exponent $\frac{\sigma^2t^2}{2\varepsilon^4}-rt$, then it follows that
    \begin{equation} \label{eq: mat norm <r}
        \prob{\|A\|\geq r} \leq (1-\varepsilon^2)^{-\frac{m+n}{2}}\exp\left(-\frac{\varepsilon^4r^2}{2\sigma^2}\right).
    \end{equation}
    Denote $\delta=(1-\varepsilon^2)^{-\frac{m+n}{2}}\exp\left(-\frac{\varepsilon^4r^2}{2\sigma^2}\right)$, then Theorem \ref{thm: mat norm} follows by plugging this expression into \eqref{eq: mat norm <r}.
\end{proof}

We point out that in Theorem \ref{thm: mat norm}, we do not require the elements of $A$ to be independent, as assumed in the proof of the Sub-Gaussian norm concentration based on the $\varepsilon$-net method \cite[Chapter 4]{vershynin2018high}.

\section{Conclusion} \label{sec: conclusion} 
This paper presents an alternative proof of the sub-Gaussian norm concentration inequality that applies to both random vectors and random matrices. The key to our proof is a modified MGF dubbed AMGF. The AMGF depends solely on the distribution of the norm of a random vector/matrix $X$, making it particularly suitable for analyzing the concentration properties of $\|X\|$. Unlike existing methods that rely on the union bound, our proof directly addresses $\prob{\|X\|>r}$, leading to a more refined analysis. Our method can also be applied to study the norm concentration of some other classes of distributions such as sub-exponential distributions. 
Beyond its immediate application to the norm concentration, the AMGF and the associated energy function $\Phi_{n}$ hold the potential for broader and lasting contributions to probability theory and related fields.

\bibliographystyle{IEEEtran}
\bibliography{main}

\end{document}